\numberwithin{equation}{section}
\newtheorem{thm}{Theorem}
\newtheorem{eg}{Example}
\newtheorem{lem}{Lemma}
\def\vs{\vspace}
\journal{TBA}
\begin{document}

\begin{frontmatter}



\title{GMRES with Singular Vector Approximations}


\author{Mashetti Ravibabu\corref{cor1}}
\cortext[cor1]{Corresponding author.}
\address{Dept of Computational and Data Sciences, Indian Institute of Science,
Bengaluru, India-560012.\\ Email: mashettiravibabu2@gmail.com}


\begin{abstract}
This paper has proposed the GMRES that augments Krylov subspaces with a set of approximate right singular vectors.
The proposed method suppresses the error norms 
of a linear system of equations. Numerical experiments comparing the proposed method with the Standard GMRES and GMRES with eigenvectors methods\cite{aug}  have been reported for benchmark matrices.
\end{abstract}

\begin{keyword}
GMRES \sep  Krylov subspace\sep Singular vectors
\end{keyword} 

\end{frontmatter}



\section{Introduction}
The 
GMRES method is well-known for solving $Ax=b,$ a large sparse system of linear equations, especially, when an approximate solution is sufficient \cite{saad}. 
Nonetheless, the error norms of approximate solutions in GMRES need not be smaller; See \cite[Example-1]{Gmrerr}.  
Prompted by this, Weiss proposed an algorithm that minimizes error norms,
The Generalized Minimal Error method (GMERR)
\cite{errm}. Ehrig and Deuflhard studied convergence properties of GMERR and developed an algorithm that has an implementation similar to GMRES \cite{Gmrerr}. 

Later, A stable variant of GMERR proposed using Householder transformations and has observed that the full version of GMERR may be effective in reducing the error, but its performance is not competitive to GMRES 
\cite{stab}. Although CGNR minimizes both error and residual norms, its convergence depends on the square of the condition number of $A$ \cite{CGNR}.

This paper develops a tool that can combine to standard restarting GMRES, and reduce both error and residual norms. The tool augments the Krylov subspaces in restarting GMRES with a set of approximate right singular vectors. 
The following is the outline of this paper: In Section-2, we present the
GMRES method. Section-3 develops the said tool, and analyzes the convergence properties of the GMRES with approximate Singular vectors method. Section-4 gives implementation details of the algorithm proposed in section-3. Section-5 reports the results of numerical experiments on some benchmark matrices. Section-6 concludes the paper.

\section{GMRES}\label{gmres}
Consider the following system of linear equations:
$$Ax =b, A \in \mathcal{C}^{~n \times n}, b~\in~\mathcal{C}^{~n}, x \in \mathcal{C}^{~n}.$$
Let $x0 \in \mathcal{C}^n$ be an arbitrarily chosen initial
approximation to the solution of the above problem. Without loss of
generality, it is assumed throughout the paper that $x0=0$ so that
$r_0 =b.$ Then, at $i^{th}$ iteration of GMRES an approximate
solution belongs to the Krylov subspace:
$$\mathcal{K}_i(A,b) ~=~span\{b,Ab,\cdots, A^{i-1} b\},$$
and is of minimal residual norm:
\begin{equation}\label{eq1}\relax
\|r_i\| = \min_{x \in \mathcal{K}_i(A,b)} \|b-Ax\|.
\end{equation}
The GMRES method solves this minimization problem by generating the matrix $V_i=\begin{bmatrix} v_1~ v_2~ \cdots~ v_i \end{bmatrix}$ in successive iterations, using the following recurrence relation:
%
%

\begin{equation}\label{eq1a}\relax
AV_i=V_iH_i+h_{i+1,i}v_{i+1}e_i^\ast, ~~\mbox{where}~~v_1 =
\frac{b}{\|b\|},
\end{equation}
and $H_i$ is an unreduced upper Hessenberg matrix of order
$i.$ Here, a vector $v_{i+1} \perp v_j$ for $j = 1,2,\cdots i,$ and 
$\|v_{i+1}\|=1.$  Thus, vectors $v_j$ for $j=1,2,\cdots i$ form an orthonormal basis for the Krylov subspace $\mathcal{K}_i(A,b).$

Next, by using the equation (\ref{eq1a}), GMRES recasts the minimization problem in (\ref{eq1}) into the following:
%
$$z_i = \arg\min_{x \in \mathcal{C}^i} \|b-AV_ix\| = \arg\min_{x \in \mathcal{C}^i} \|\beta V_{i+1}e_1-V_{i+1}\tilde{H_i} x\|,$$
where $\beta = \|b\|,$ and $\tilde{H_i}$ is an upper Hessenberg matrix 
obtained by appending the row $[0~0~\cdots~h_{i+1,i}]$ at the bottom of the matrix $H_i.$ As columns of the matrix $V_{i+1}$ are orthonormal, the above least squares problem is equivalent to the following problem:
$$z_i =\arg\min_{x \in \mathcal{C}^i} \|\beta e_1-\tilde{H_i} x\|.$$
%
%
GMRES solves this problem for the vector $z_i$ by using the $QR$ decomposition of the matrix $\tilde{H_i}.$ Note that a vector $V_iz_i$ minimizes the associated residual norm due to the equation~(\ref{eq1}). Thus, the sequence of residual norms $\{\|b-AV_iz_i\|_2\}$ in GMRES is monotonically decreasing. However, the corresponding sequence of error norms  may not decrease monotonically. To tackle this, in the following sections, we augment the Krylov subspace in the GMRES method with a vector space containing approximate singular vectors. 
%
\section{Motivation:}
In this section, we are addressing the augmentation of a Krylov subspace in  GMRES with the singular vectors. The following lemma explains the motivation behind this. 
\begin{lem}
Let $z$ be a right singular vector of a matrix $A$ corresponding to the 
singular value $\sigma,$ and $x0$ be an approximate solution of a
linear system of equations $Ax=b.$ Then, a solution of the following minimization problem
\begin{equation}\label{mp1}\relax
\alpha = \arg \min_{k \in \cal{C}}\|b-Ax0-kAz\|,
\end{equation}
is the solution of the minimization problem 
\begin{equation}\label{mp2}\relax
\alpha = \arg \min_{k \in \cal{C}}\|x-x0-kz\|.
\end{equation}
\end{lem}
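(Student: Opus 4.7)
The plan is to reduce both minimization problems to one-parameter least-squares problems with closed-form solutions, and then verify those two scalars coincide by invoking the defining SVD identities.

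Set $r_0 := b - Ax_0$. Since $Ax = b$, note $r_0 = A(x - x_0)$. Let $u$ be the left singular vector paired with $z$; without loss of generality $\|z\| = \|u\| = 1$, so the SVD supplies the twin relations
$$Az = \sigma u, \qquad A^{\ast}u = \sigma z.$$

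First, I would tackle problem (\ref{mp1}). Writing $\|r_0 - kAz\|^2$ and differentiating in $k$ (equivalently, imposing the one-dimensional normal equation $\langle Az,\, r_0 - \alpha Az\rangle = 0$) gives
$$\alpha \;=\; \frac{\langle Az,\, r_0\rangle}{\|Az\|^2}.$$
Substituting $Az = \sigma u$ and $r_0 = A(x-x_0)$, the numerator becomes $\sigma\langle u,\, A(x-x_0)\rangle = \sigma\langle A^{\ast}u,\, x-x_0\rangle = \sigma^{2}\langle z,\, x-x_0\rangle$, and the denominator is $\sigma^{2}$. Hence $\alpha = \langle z,\, x-x_0\rangle$.

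Next, for problem (\ref{mp2}), the same normal-equation step yields
$$\alpha \;=\; \frac{\langle z,\, x-x_0\rangle}{\|z\|^{2}} \;=\; \langle z,\, x-x_0\rangle,$$
which matches the value obtained for (\ref{mp1}); the lemma follows.

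The proof is essentially bookkeeping, and the main subtlety is only cosmetic: making sure the adjoint is applied to the correct factor inside the complex inner product, and recognizing that $r_0 = A(x-x_0)$ is what bridges the two problems. One mild caveat is the degenerate case $\sigma = 0$: then $Az = 0$ and problem (\ref{mp1}) is independent of $k$, so the minimizer is not unique; the statement should therefore be read as holding for $\sigma > 0$, which is the only case of practical interest for augmentation.
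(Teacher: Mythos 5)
Your proposal is correct and follows essentially the same route as the paper: both reduce each problem to its one-dimensional normal equation and use the singular-vector identity (the paper via $A^{\ast}Az=\sigma^{2}z$, you via the equivalent pair $Az=\sigma u$, $A^{\ast}u=\sigma z$) to show the two optimal scalars coincide with $\langle z,\,x-x_0\rangle$. Your remark about the degenerate case $\sigma=0$ is a sensible caveat the paper omits, but otherwise the arguments are the same.
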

\begin{proof}
Let $\alpha$ be a
solution of the minimization problem (\ref{mp1}). By using $Ax=b,$
this implies $\alpha = \frac{\langle Ax-Ax0,Az \rangle}{\|Az\|^2}.$ Further, by using $\|z\|_2=1$ and  $A^\ast A z =\sigma^2 z$ from the hypothesis, we have $\alpha =
\frac{\langle x-x0,z \rangle}{\|z\|^2}.$ 
Equivalently, this gives
$$\langle x-x0-\alpha z,z \rangle = \langle x-x0,z \rangle- \alpha
\|z\|^ 2=0.$$ Thus, a vector
$x-x0-\alpha z$ is orthogonal to the vector space spanned by the vector $z.$ Therefore, $\alpha$ is a
solution of the  minimization problem (\ref{mp2}).
\end{proof}
The above lemma has shown an advantage of the singular vectors that they reduce both residual and error norms.
Now the following two questions arise when augmenting the Krylov subspace in GMRES with a vector space spanned by singular vectors.
The first question is computing a singular vector of a sparse matrix requires more computation than finding the solution of a sparse linear system of equations, in general. The second question to address is singular vectors corresponding to what singular values are better to augment Krylov subspaces in GMRES. 

Usage of an approximate singular vector in the augmentation process instead of an exact singular vector resolves the first problem. The following theorem discusses the effect of this usage.

\begin{thm}\label{mg}\relax
Let a matrix $V_m$ have orthonormal columns and $x0$ be an
approximate solution of a linear system of equations $Ax =
b.$ Assume that $x-x0$ is in the range space of $V_m$ and $z$ is a right
singular vector of the matrix $AV_m$ corresponding to its singular value
$\sigma.$ Then a solution of the following minimization
problem
\begin{equation}\label{mp3}\relax
\alpha = \arg \min_{k \in \cal{C}}\|b-Ax0-kAV_mz\|
\end{equation}
is the solution of the minimization problem:
\begin{equation}\label{mp4}\relax
\alpha = \arg \min_{k \in \cal{C}}\|x-x0-kV_mz\|.
\end{equation}
\end{thm}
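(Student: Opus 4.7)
The plan is to mimic the proof of the preceding lemma, with two added ingredients: the singular-vector identity for $AV_m$ (rather than $A$) and the range-space hypothesis on $x-x0$. Both least-squares problems are scalar in $k$, so in each case the minimizer is characterized by a single normal equation.

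First I would write down the normal equation for problem (\ref{mp3}): differentiating with respect to $k$ gives
\[
\alpha \;=\; \frac{\langle b-Ax0,\,AV_m z\rangle}{\|AV_m z\|^2}.
\]
Substituting $b=Ax$ and moving $A$ across the inner product yields $\langle x-x0,\,A^\ast AV_m z\rangle$ in the numerator. The key algebraic step is to recognize that since $z$ is a right singular vector of $AV_m$ with value $\sigma$, one has $(AV_m)^\ast(AV_m)z = V_m^\ast A^\ast A V_m z = \sigma^2 z$, and $\|AV_m z\|^2 = \sigma^2 \|z\|^2$.

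The main obstacle is that $A^\ast A V_m z$ is not obviously a scalar multiple of $V_m z$, so the reduction of the lemma does not transfer verbatim. This is exactly where the hypothesis $x-x0 \in \mathrm{range}(V_m)$ enters: writing $x-x0 = V_m w$ (and noting $w = V_m^\ast(x-x0)$ by orthonormality), the numerator collapses as
\[
\langle V_m w,\,A^\ast A V_m z\rangle \;=\; \langle w,\, V_m^\ast A^\ast A V_m z\rangle \;=\; \sigma^2 \langle w, z\rangle,
\]
while the denominator equals $\sigma^2\|z\|^2$. The $\sigma^2$ cancels, giving $\alpha = \langle w, z\rangle/\|z\|^2 = \langle x-x0,\,V_m z\rangle / \|V_m z\|^2$, where the last equality uses $\|V_m z\|=\|z\|$ since $V_m$ has orthonormal columns.

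Finally I would recognize the resulting expression as precisely the normal equation for problem (\ref{mp4}), i.e.\ $\langle x-x0-\alpha V_m z,\,V_m z\rangle = 0$, and conclude that the same $\alpha$ solves (\ref{mp4}). I expect the write-up to be short; the only subtlety to flag is that the hypothesis $x-x0 \in \mathrm{range}(V_m)$ is indispensable, because without it the $A^\ast A V_m z$ term cannot be replaced by $\sigma^2 V_m z$ in general.
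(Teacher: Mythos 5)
Your proposal is correct and follows essentially the same route as the paper's own proof: both decompose $x-x0=V_mw$ (the paper writes $V_my$), use $V_m^\ast A^\ast AV_mz=\sigma^2 z$ together with $V_m^\ast V_m=I$ to collapse the numerator, and conclude via the orthogonality (normal-equation) condition of the preceding lemma. Your explicit remark on why the range-space hypothesis is indispensable is a helpful addition, but the substance of the argument is the same.
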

\begin{proof}
Let $x-x0=V_my.$ Note that if $y$ is a
zero vector then $x0$ is an exact solution of the linear system $Ax=b.$ Assume that $y$ is a non-zero vector. Now, $\alpha,$ a solution of the minimization problem (\ref{mp3}) is 
\begin{equation}\label{nt}\relax
\alpha
= \frac{\langle Ax-Ax0,AV_mz\rangle}{\|AV_mz\|^2}=\langle y,z\rangle=\langle x-x0,V_mz
\rangle.
\end{equation}
The above equation has used the facts that $V_m^\ast A^\ast A V_m z= \sigma ^2 z,$ and $V_m^\ast V_m$ is an Identity matrix. Therefore, by using the
same lines of proof as in the previous lemma, $\alpha$ is a solution of the error minimization problem (\ref{mp4}).
\end{proof}
The Theorem-\ref{mg} says that if $x-x0$ is in the Krylov subspace spanned by the columns of $V_m$ a singular vector of $AV_m$ will serve the purpose of a singular vector of $A$ in the augmentation process. However, the error vector $x-x0$ may not lie entirely in the said
subspace,
in general. In this case, the following theorem establishes a relationship between the solutions of minimization problems (\ref{mp3}) and (\ref{mp4}).
\begin{thm}\label{lem2}\relax
Let $x0$ be an approximate solution of the linear system of equations $Ax=b$ and $\sigma,z$ are same as in the previous theorem. Assume that $\alpha_1,$ $\alpha_2$ are solutions of the minimization problems (\ref{mp3}) and (\ref{mp4}) respectively. Then, 
\begin{equation}\label{a12}\relax
\|x-x0-\alpha_1 V_mz\|^2-\|x-x0-\alpha_2V_mz\|^2= \frac{|\langle x-x0, (A^\ast
A-\sigma^2 I)V_mz \rangle|^2}{\sigma^4}.
\end{equation}
\end{thm}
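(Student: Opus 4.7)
The plan is to exploit the fact that both $\alpha_1$ and $\alpha_2$ are univariate quadratic minimizers, so each has a closed form, and then to use the Pythagorean identity around the error minimizer $\alpha_2$ to convert the difference of squared norms into the squared difference of coefficients.

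First, I would write down explicit formulas for the two minimizers. Since $V_m$ has orthonormal columns and $\|z\|=1$, the vector $V_m z$ is a unit vector, so the error minimization over $k$ gives
\[
\alpha_2 = \langle x-x0,\, V_m z\rangle.
\]
For $\alpha_1$, minimizing $\|b-Ax0-k AV_m z\|^2 = \|A(x-x0)-k AV_m z\|^2$ in $k$ and using the identity $\|AV_m z\|^2 = z^\ast V_m^\ast A^\ast A V_m z = \sigma^2$, which follows from $z$ being a right singular vector of $AV_m$ with singular value $\sigma$, yields
\[
\alpha_1 = \frac{\langle A(x-x0),\, AV_m z\rangle}{\sigma^2} = \frac{\langle x-x0,\, A^\ast A V_m z\rangle}{\sigma^2}.
\]

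Next, I would invoke the standard projection identity. Because $\alpha_2$ is precisely the coefficient of the orthogonal projection of $x-x0$ onto $\mathrm{span}\{V_m z\}$, for every $k \in \mathbb{C}$ we have
\[
\|x-x0-k V_m z\|^2 = \|x-x0-\alpha_2 V_m z\|^2 + |k-\alpha_2|^2 \|V_m z\|^2,
\]
and since $\|V_m z\|=1$, applying this with $k=\alpha_1$ collapses the left-hand side of the theorem to
\[
\|x-x0-\alpha_1 V_m z\|^2 - \|x-x0-\alpha_2 V_m z\|^2 = |\alpha_1-\alpha_2|^2.
\]

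The last step is a direct algebraic combination of the two formulas above:
\[
\alpha_1 - \alpha_2 = \frac{\langle x-x0,\, A^\ast A V_m z\rangle - \sigma^2\langle x-x0,\, V_m z\rangle}{\sigma^2} = \frac{\langle x-x0,\, (A^\ast A - \sigma^2 I)V_m z\rangle}{\sigma^2},
\]
and squaring moduli gives the claimed right-hand side. I do not expect any genuine obstacle; the only subtlety is remembering that the singular-vector identity one must use is $V_m^\ast A^\ast A V_m z = \sigma^2 z$ rather than $A^\ast A V_m z = \sigma^2 V_m z$ (the latter would trivialize the right-hand side to zero and would generally be false, since $V_m z$ need not be a singular vector of $A$ itself). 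Everything else is just clean bookkeeping around the Pythagorean decomposition at the error-optimal coefficient $\alpha_2$.
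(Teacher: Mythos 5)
Your proposal is correct and follows essentially the same route as the paper: both reduce the difference of squared norms to $|\alpha_1-\alpha_2|^2$ via the orthogonality of $x-x0-\alpha_2 V_mz$ to $V_mz$, compute $\alpha_1=\langle x-x0, A^\ast A V_m z\rangle/\sigma^2$ and $\alpha_2=\langle x-x0,V_mz\rangle$ using $\|AV_mz\|^2=\sigma^2$, and combine. The only cosmetic difference is that the paper reaches the formula for $\alpha_1$ by first splitting $x-x0$ into $V_mV_m^\ast(x-x0)+(I-V_mV_m^\ast)(x-x0)$, whereas you go to the closed form directly; your caveat that $V_m^\ast A^\ast A V_m z=\sigma^2 z$ must not be confused with $A^\ast A V_m z=\sigma^2 V_m z$ is exactly the right point to flag.
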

\begin{proof}
Note that $x-x0=V_mV_m^\ast(x-x0)+(I-V_mV_m^\ast)(x-x0).$ By using $Ax=b$ and this, the solution $\alpha_1$ of the minimization problem (\ref{mp3}) can be
written as 
$$\alpha_1 = \frac{\langle AV_mV_m^\ast (x-x0)+A(I-V_mV_m^\ast )(x-x0), AV_mz
\rangle}{\|AV_mz\|^2}. $$
On substituting the equation (\ref{nt}) this yields
$$\alpha_1= \langle x-x0,V_mz \rangle +\frac{\langle
x-x0, (I-V_mV_m^\ast)A^\ast AV_mz \rangle}{\|AV_mz\|^2}.$$
Further, by using 
$V_m^\ast A^\ast AV_mz = \sigma^2 z,$ this gives
\begin{equation}\label{a1}\relax
\alpha_1 = \langle x-x0,V_mz \rangle +\frac{\langle x-x0, (A^\ast A- \sigma^2 I) V_m z \rangle}{\|AV_mz\|^2}=\big\langle x-x0,\frac{A^\ast AV_mz}{\sigma^2} \big\rangle.
\end{equation}
The above equation used the fact that $\|AV_mz\|^2 = \sigma^2.$ 
As $\|V_mz\|_2=1$ and $\alpha_2$ is the solution of an error minimization problem (\ref{mp4}), we have $\alpha_2= \langle x-x0,V_mz \rangle,$ and 
$$\|x-x0-\alpha_1 V_mz\|^2-\|x-x0-\alpha_2V_mz\|^2 =
|\alpha_1-\alpha_2|^2\|V_mz\|^2 = |\alpha_1-\alpha_2|^2.$$ Now, observe from the equation (\ref{a1}) that 
\begin{equation}\label{a1m2}\relax
\alpha_1-\alpha_2 = \big\langle x-x0,\frac{(A^\ast A-\sigma^2 I)
V_mz}{\sigma^2} \big \rangle,
\end{equation}
and substitute it in the previous equation. It gives the equation (\ref{a12}). Hence, we proved the theorem.
\end{proof}

From $V_m^\ast A^\ast AV_mz=\sigma^2z$ note that $(A^\ast A-\sigma^2 I)
V_mz=(I-V_mV_m^\ast)(A^\ast A-\sigma^2 I)
V_mz.$ On substituting this in the right-hand side of the equation (\ref{a12}), it is easy to see that 
the difference between  $\|x-x0-\alpha_1 V_mz\|^2$ and $\|x-x0-\alpha_2 V_mz\|^2$ was only due to components orthogonal to $V_m$ in $x-x0,$ that means, 
"The components from the column space of $V_m$ are optimally balanced in the error vector $x-x0-\alpha_1V_mz."$ Thus, augmenting a search subspace in  GMRES with a singular vector approximation will accelerate the convergence of approximate solutions.
This fact motivates us to augment the Krylov subspace at each run in the restarting GMRES with the singular vector approximations as explained in the following paragraph.

\vs{0.2cm}
Let $e_m^{(i)}$ denotes an error vector, and the columns of $V_m^{(i)}$ span the search subspace at the $i^{th}$ run of restarting GMRES. 
Suppose $z$ is a right singular vector of $AV_m^{(i)}$ and it augments the column space of $V_m^{(i+1)}$ at the $(i+1)^{th}$ run.
 Then, the Theorem-\ref{lem2} and the previous paragraph says that 
the components of the column space of $V_m^{(i)}$ are optimally balanced in the error vector that corresponds to the vector minimizing a residual norm 
over Range$(V_m^{(i+1)},V_m^{(i)} z).$ 

Next, the following theorem is required to answer the second question that  we arose before, approximate singular vectors corresponding to what singular values are better to augment the Krylov subspace in GMRES. 
The proof will follow the lines of Sections 3 and 4 in
\cite{zitko}.
\begin{thm}\label{conv}\relax
Let a subspace range of $Y_k$ be augmenting the Krylov subspace 
$\mathcal{K}_m(A,r_0),$ where $Y_k \in C^{n \times k}$ and $m+k < n.$ Assume that $z \in Y_k,$ and $q(A)$ is a polynomial in $A$ of degree $m$ such that $q(0)=0.$ Let $r_s:=r_0-y$ be an optimal residual over the space Range$(AV_m, AY_k), $ where $y := \|r_0\|q(A)v+Az.$ Then 
\begin{equation}\label{zit}\relax
\frac{\|r_s\|^2}{\|r_0\|^2} \leq 1-\min_{w \in S_n} \frac{|w^\ast
q(A) w|^2+\|w^\ast AY_k\|^2}{\|q(A)\|^2+\|AY_k\|_F^2},
\end{equation}
where $ S_n$ denotes the unit sphere in $C^n,$ and $\|~.~\|_F$ is the Frobenius norm.
\end{thm}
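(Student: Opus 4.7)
The plan is to reinterpret the claim as a lower bound on the projection of $r_0$ onto the augmented range. Since $r_s$ is the optimal residual, it is orthogonal to $\mathrm{Range}(AV_m, AY_k)$, so $\|r_s\|^2 = \|r_0\|^2 - \|Pr_0\|^2$, where $P$ is the orthogonal projector onto that subspace. It therefore suffices to establish that $\|Pr_0\|^2/\|r_0\|^2$ is at least the ratio inside the minimum on the right-hand side of (\ref{zit}), evaluated at the particular unit vector $v := r_0/\|r_0\|$; the passage to $\min_{w\in S_n}$ is then automatic.

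The central construction is an $n\times(k+1)$ test matrix
$$U \;=\; \big[\, q(A)\,v \quad AY_k \,\big].$$
The hypothesis $q(0)=0$ is precisely what makes $\mathrm{Range}(U) \subseteq \mathrm{Range}(AV_m,AY_k)$: we factor $q(A) = A\,\tilde q(A)$ with $\deg\tilde q = m-1$, so $q(A)v \in A\,\mathcal{K}_m(A,v) = \mathrm{Range}(AV_m)$, while the remaining columns of $U$ lie trivially in $\mathrm{Range}(AY_k)$.

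I would then chain two standard inequalities. Monotonicity of orthogonal projections gives $\|Pr_0\|^2 \geq \|P_U r_0\|^2$, and $\|P_U r_0\|^2 = r_0^\ast U (U^\ast U)^\dagger U^\ast r_0 \geq \|U^\ast r_0\|^2/\|U\|_2^2$ (the smallest positive eigenvalue of $(U^\ast U)^\dagger$ equals $1/\|U\|_2^2$). A direct computation with $r_0 = \|r_0\|\,v$ gives
$$\|U^\ast r_0\|^2 \;=\; \|r_0\|^2\,\big(|v^\ast q(A) v|^2 + \|v^\ast AY_k\|^2\big),$$
while the elementary inequality $\|U\|_2^2 \leq \|U\|_F^2 \leq \|q(A)\|^2 + \|AY_k\|_F^2$ reproduces exactly the denominator of (\ref{zit}).

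The main difficulty lies in choosing $U$ cleverly: its column space must sit inside the augmented range — this is where $q(0)=0$ enters, since we may include only the single column $q(A)v$ rather than the full matrix $q(A)$ — and its spectral norm must at the same time be cleanly dominated by $\|q(A)\|^2 + \|AY_k\|_F^2$. Replacing $\|U\|_2^2$ by $\|U\|_F^2$ is the loose step that decouples the polynomial contribution from the $AY_k$ contribution and produces precisely the form of the denominator appearing in (\ref{zit}), completing the proof along the lines of \cite{zitko}.
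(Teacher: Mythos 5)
Your proposal is correct and follows essentially the same route as the paper's proof: the same test matrix $U=(q(A)v,\ AY_k)$, the same use of $q(0)=0$ to place $q(A)v$ in $\mathrm{Range}(AV_m)$, and the same chain $\|P_U r_0\|^2 \geq \|U^\ast r_0\|^2/\lambda_{\max}(U^\ast U) \geq \|U^\ast r_0\|^2/\mathrm{Trace}(U^\ast U)$, your step $\|U\|_2^2 \leq \|U\|_F^2$ being exactly the paper's trace bound. The only (harmless) difference is that you work with the pseudoinverse $(U^\ast U)^\dagger$, which quietly covers the rank-deficient case the paper excludes by assuming $U$ has full rank.
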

\begin{proof}
Let $U:= (q(A)v, AY_k)$ is a matrix of full rank. Then,
$P=U(UU^\ast)^{-1}U^\ast$ defines an orthogonal projection onto the
space $\textbf{Range}(q(A)v, AY_k).$ Thus, $$Pr_0 \in \textbf{Range}(q(A)v, AY_k).$$ Since $q(A)v \in AV_m,$ and $r_s:=r_0-y$ is an optimal residual over $\textbf{Range} (AV_m, AY_k), $
this implies
$$\|r_s\|^2 \leq \|r_0-Pr_0\|^2 = \|(I-P)r_0\|^2.$$
This gives
$$\frac{\|r_s\|^2}{\|r_0\|^2} \leq \|(I-P)\frac{r_0}{\|r_0\|}\|^2=\|(I-P)v\|^2 = 1-v^\ast Pv.$$
By using $P=U(UU^\ast)^{-1}U^\ast$ the above equation gives the following:
$$\frac{\|r_s\|^2}{\|r_0\|^2} \leq 1-\|U^\ast v\|^2 \lambda_{min}(U^\ast U)^{-1} \leq 1-\frac{\|U^\ast v\|^2}{\lambda_{max}(U^\ast U) }\leq 1-\frac{\|U^\ast v\|^2}{Trace(U^\ast U)}.$$
Since $U= (q(A)v, AY_k),$ we have $\|U^\ast v\|^2 =|v^\ast
q(A) v|^2+\|v^\ast AY_k\|^2$ and $Trace(U^\ast U)=\|q(A)v\|^2+\|AY_k\|_F^2.$
Substituting these inequalities in the above equation gives the following:
$$\frac{\|r_s\|^2}{\|r_0\|^2} \leq 1-\frac{|v^\ast
q(A) v|^2+\|v^\ast AY_k\|^2}{\|q(A)v\|^2+\|AY_k\|_F^2} \leq 1-\frac{|v^\ast
q(A) v|^2+\|v^\ast AY_k\|^2}{\|q(A)\|^2+\|AY_k\|_F^2}.$$
Here, the second inequality used the facts that $\|v\|_2=1$ and $\|q(A)v\|_2 \leq \|q(A)\|_2.$ Now minimizing the numerator of the second term over $S_n,$  the unit sphere in $C^n,$ gives the equation (\ref{zit}).
Hence, the theorem proved.
\end{proof}
 From the Theorem-\ref{conv} note that the norm of an updated residual
$\|r_s\|$ deviates more from $\|r_0\|$ when
$\|AY_k\|_F$ is smaller. It is well known that $\|AY_k\|_F$ is small when columns of $Y_k$ are right singular vectors corresponding to smaller singular
values of $A.$ This answers the second question that we arose before. 
%
The next section devises the GMRES with approximate singular vectors method. The new algorithm  augments a Krylov subspace at each run with an approximate right singular vector from the previous run. 
\section{Implementation}
Let $x0$ be an initial approximate solution of a linear
system of equations $Ax =b,$ and $r_0=b-Ax0.$ Let $m$ be the
dimension of a search subspace consists of $m-k$ dimensional Krylov subspace ${\cal K}_{m-k}(A,r_0)$ and $k<m$ approximate singular vectors. Let $W$ be a matrix of order $n \times m.$ Assume that the first $(m-k)$ columns of $W$ form an orthonormal basis for the Krylov subspace ${\cal K}_{m-k}(A,r_0)$ and 
its last $k$ columns are approximate singular vectors $y_i,$ for $i=1,2,\cdots k.$

The new algorithm recursively constructs first $m-k$ columns of $W$ and an orthonormal basis matrix $Q$ of an $m$ dimensional search subspace. The matrices $W$ and $Q$ satisfy the following relation:
$$AW = Q\tilde{H},$$
where $\tilde{H}$ is an upper Hessenberg matrix of order $(m+1) \times m.$ Note that $Q$ is a matrix of order $n \times (m+1)$ and its first $m-k+1$ columns are formed using the Arnoldi recurrence relation. The last $k$ columns of it are formed by successively orthogonalizing the vectors $Ay_i$ for $i=1,2,.....,k$ against its previous columns. Further, notice that $Q^\ast r_0$ is a multiple of a first coordinate vector.

Similar to the GMRES algorithm, the new algorithm computes an orthogonal matrix $P$ of order $(m+1)$  and an upper
triangular matrix $R$ of order $(m+1) \times m$ such that
$$P\tilde{H} = R.$$ Then, it finds a vector $d$ such that $\|r\|= \|b-A(x0+Wd)\|$ is  minimum, and updates an approximate solution to \^{x}$:= x0+Wd.$ Note that 
$$\|r\| =  \|b-A(x0+Wd)\| = \|r_0-AWd\| = \|r_0-Q\tilde{H}d\|$$
$$ = \|Q^\ast r_0- \tilde{H}d\| = \|PQ^\ast r_0- Rd\|.~~~~~~~~~~~~~~~~~~~$$
As $R$ is an upper triangular matrix of order $(m+1) \times m$ and $\|PQ^\ast r_0- Rd\|$ is minimum, the new method gives the minimal solution by solving for $d$  that makes the first $m$ entries of $PQ^\ast r_0- Rd$ zero. Hence, $\|r\|$ is equal to the magnitude of the last entry of
$PQ^\ast r_0.$ Therefore, in the new method, $\|r\|$ is a
byproduct and does not require any extra computation.

Next, We wish to find approximate right singular vectors of $A$  from the
subspace spanned by $W$ to augment the search subspace in the next run. For this, we find eigenvectors corresponding to the $k$ smaller
eigenvalues of the matrix $W^\ast A^\ast AW.$ A little calculation is required to compute this matrix,
because of 
$$G:=W^\ast A^\ast AW = \tilde{H}^
\ast Q^\ast Q  \tilde{H}= \tilde{H}^\ast  \tilde{H}= R^\ast R.$$ We used the  Matlab command "eigs" to
solve the eigenvalue problem for $G.$

The implementation of our new method is as follows.
For simplicity, a listing of the algorithm has done for the second
and subsequent runs.
\begin{center}
\large One restarted run of GMRES with singular vectors
\end{center}
1. Initial definitions and calculations: The Krylov subspace has
dimension m-k, k is the number of approximate eigenvectors. Let
$q_l= \frac{ r_0}{\|r_0\|}$ and $w_l=q_l.$ Let $y_1, y_2,..., y_k$ be
the approximate singular vectors. Let $W_{m+i}= y_i,$ for $i=
1,2,...,k.$ \\
2. Generation of Arnoldi vectors: For $j= 1, 2,..., m$ do:
\begin{center}
$h_{i,j}= \langle Aq_j, q_i \rangle , ~i= 1, 2,..., j$, \\
$\hat{q}_{j+1} = Aq_j-\displaystyle \sum\limits _{i=1}^{j}
h_{i,j}q_i,$\\ \vs{0.2cm}$h_{j+1,j} = \| \hat{q}_{j+1} \|,$ and \\
\vs{0.2cm} $q_{j+1} = \hat{q}_{j+1} / h_{j+1,j}.$ \\\vs{0.2cm} If $j
< m-k,$ let $w_{j+1}= q_{j+1}.$
\end{center}
3.Addition of approximate singular vectors: For $j=
m-k+1,m-k+2,...m$,do:
\begin{center}
$h_{i,j} = \langle Aw_j, q_i\rangle,$ ~$i= 1, 2,..., j$, \\
$\hat{q}_{j+1} = Aw_j-\displaystyle \sum\limits _{i=1}^{j}
h_{i,j}q_i,$\\
$h_{j+1,j} = \|{\hat{q}_{j+1}}\|,$ and\\\vs{0.2cm}
$q_{j+1} = \frac{\hat{q}_{j+1}}{h_{j+1,j}}.$\\
\end{center}
4. Form the approximate solution: Let $\beta = \|r_0\|.$  Find $d$
that minimizes $\|\beta e_1-\tilde{H}d\|$ for all $d \in {\cal R}^{m}
$. The orthogonal factorization $P\tilde{H}= R$, for R upper
triangular, is used. Then $\hat{x}= xo + Wd.$ \\
5. Form the new approximate singular vectors: Calculate $G= R^\ast
R.$ Solve $Gg_i=\sigma^2 g_i$, for the appropriate $g_i.$ Form $y_i= 
Wg_i$ and $Ay_i= Q \tilde{H}g_i$.\\
6. Restart: Compute $r= b- A\hat{x}$; if satisfied with the residual
norm then stop, else let $x0=\hat{x}$ and go to 2.

Only the Step-5 in the above algorithm is different from the GMRES with eigenvectors method. The GMRES with eigenvectors method requires the
computation of both $F=W^\ast A^\ast W$ and $G$ 
\cite{aug}, whereas the Step-5 in the above algorithm computes the only $G=W^\ast A^\ast AW.$ Hence, the above algorithm requires less computation
and storage compared to the GMRES with eigenvectors method. 

Next, we compare the GMRES
with singular vectors and standard GMRES methods. For this, we follow the  procedure that used in \cite{aug} to compare the standard GMRES and GMRES with eigenvectors methods.
It compares only significant expenses. 

Suppose the 
search subspace currently at hand is a 
Krylov subspace of dimension $j.$ If the search subspace expands with one more Arnoldi vector, then it requires one matrix-vector product. The orthogonalization requires about $2jn$ multiplications. Instead, if search subspace expanded with a singular vector approximation, no matrix-vector product is required. The other costs are approximately $4jn$ multiplications. It includes $2jn$ multiplications for orthogonalization and $2jn$ for
computing $y_i$ and $Ay_i.$ 
Hence, GMRES with singular vectors
requires $2jn$ extra multiplications compared to the standard GMRES, but at
the cost of a matrix-vector product in GMRES that requires $n^2$ multiplications. In general, $2j << n.$  Therefore, the GMRES with
singular vectors method requires overall less computation than standard
GMRES.

The GMRES with $k$ singular vector approximations method requires the storage of $m+2k+2$ vectors. This includes the storage of $2k$ vectors, $y_i$ and $Ay_i$ for $i=1,2,\cdots,k.$ However, the storage requirement for  standard GMRES with $m+k$ dimensional Krylov subspace is $m+k+2$ vectors. Thus, the GMRES with singular vectors method requires extra storage compared to standard GMRES. Since $k << m$ this extra storage is often not a problem.
\section{Examples}\vs{-0.2cm} 
In the following, GMRES-SV(m,k) indicates that at each run $k$ approximate singular vectors from the previous run augment Krylov subspace of dimension $m-k.$ Similarly, GMRES-HR(m,k) indicates the augmentation of approximate eigenvectors those obtained using the Harmonic Rayleigh-Ritz process to a Krylov subspace. Moreover, in the first run of both the methods search subspace is a Krylov subspace of dimension $m$ .
%

In each of the example, we compare GMRES-SV(m,k) with GMRES-HR(m,k), GMRES(m), and GMRES(m+k). Here for GMRES, the number in the  parenthesis represents the dimension of a Krylov subspace at each run. Note that the dimension of search subspaces in GMRES(m), GMRES-SV(m,k) and GMRES-HR(m,k) are same, and in GMRES(m+k) the search subspace at each run requires nearly the same storage as that of GMRES-SV(m,k) and GMRES-HR(m,k).

In all numerical examples, the right-hand sides
have all entries $1.0,$ unless mentioned otherwise. The initial
guesses $x0$ are zero vectors. Further, we stopped each algorithm
when $\|r\|/\|b\|$ reduced below the fixed tolerance $10^{-8}.$ All
experiments have been carried out using MATLAB R2016b on intel core i7 system  with $3.40GHZ$ speed. 
\begin{eg}\label{eg1}\relax
This example is same as the example-1 in \cite{errm}. The linear system results from the discretization of one dimensional Laplace equation. The coefficient matrix $A$ is a symmetric tridiagonal matrix of dimension $1000,$ and the right-hand side vector is $(1~ 0~ 0~ \cdots~ 0~ 1)'$. The entry on the main diagonal of $A$ is $2$, whereas $−1$ is on the sub-diagonals of $A$.     The matrix has eigenvalues $\lambda_k= 2.(1-\cos \frac{k\pi}{1001})$ for $1 \leq k \leq 1000$ and its condition number is $(1+\cos \frac{\pi}{1001})/(1-\cos \frac{\pi}{1001}).$
\end{eg}
The Figure-\ref{fig1-B} depicts the convergence of residual norms and corresponding error norms in the GMRES-SV(20,4), GMRES-HR(20,4), GMRES(20), and standard GMRES(24) methods.
\begin{figure}[!htb]
\begin{center}
\includegraphics[width=5.5in, height=2.5in]
{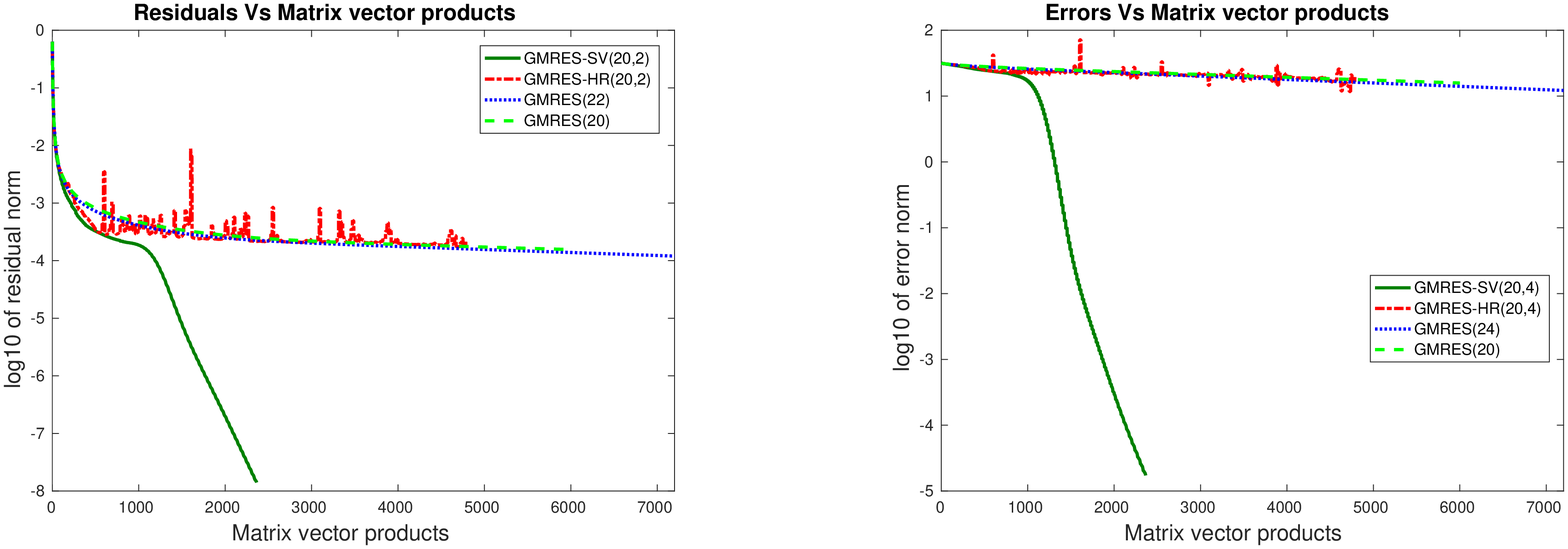}
\vs{-0.5cm}
\caption{ Magnitudes of $\frac{\|r\|}{ \|b\|}$ with GMRES(20,4) singular vectors/eigenvectors, GMRES(24), and GMRES(20)  (left). Absolute errors with GMRES(20,4) singular vectors/eigenvectors, GMRES(24), and GMRES(20)  (right). }
\label{fig1-B}
\end{center}
\vs{-0.5cm}
\end{figure}

In GMRES-SV(20,4), $\|r\|/\|b\|$ drops to below the tolerance $10^{-8}$ in the $148^{th}$ run. It required $2365$ number of matrix-vector products.  In the remaining three methods $\|r\|/\|b\|$ did not reached at least $10^{-4}$ even after $5000$ matrix-vector products. Here, the total
number of matrix-vector products in all methods counted in a similar way
as in \cite{aug}.

Observe from the right part of Figure-\ref{fig1-B} that GMRES-SV(20,4) reduces error norms also to a far  better extent than the remaining three methods. Here, error norm is the norm of an  error vector, a difference between a solution obtained using "backslash" command in Matlab and an approximate solution in an iterative method.

 From the Figure-\ref{fig1-B}, we observed that when residual norm drops below the tolerance $10^{-8},$ the $\log10$ of an error norm in the GMRES-SV(20,4) is $-4.763.$ In the other three methods, at $5000^{th}$ matrix-vector product it is just near $1.244$. Therefore, this example illustrates the fact that the augmentation of a Krylov subspace with singular vectors reduces error norms and also the residual norms. 

 

\begin{eg}\label{sherman}\relax
Consider the matrix $SHERMAN4$ that comes from Oil reservoir modeling. It is a real un-symmetric matrix of order $1104.$  The Matrix market provided the right-hand side vector. We compare GMRES-SV(20,4)
(16 Krylov
vectors and 4 approximate right singular vectors) with GMRES-HR(20,4), GMRES(20), and GMRES(24).  
\end{eg}\vs{-0.2cm}
See left part of Figure \ref{fig1} for the convergence of $log10$ of residual norms in all the methods.
\begin{figure}[!htb]
\begin{center}
\includegraphics[width=5.5in,height=2.5in]
{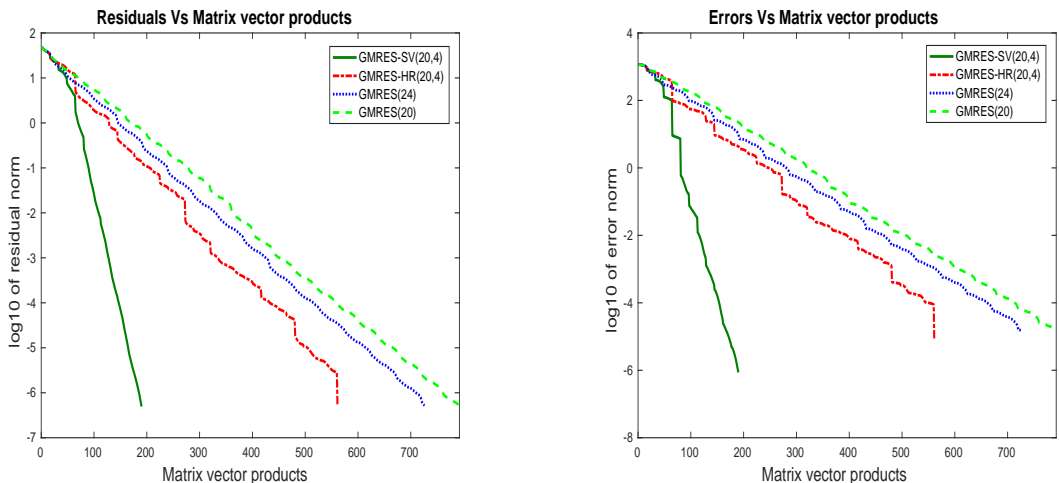}
\vs{-0.5cm}
\caption{ Magnitudes of $\frac{\|r\|}{ \|b\|}$ with GMRES(20,4) singular vectors/eigenvectors, GMRES(24), and GMRES(20)  (left). Absolute errors with GMRES(20,4)  singular vectors/eigenvectors, GMRES(24), and GMRES(20)  (right). }
\label{fig1}
\end{center}
\vs{-0.5cm}
\end{figure}

In GMRES-SV(20,4) the quantity $\|r\|/\|b\|$
reduced to below $10^{-8}$ at the $12^{th}$ run.  
The total number of matrix-vector products it required is $190.$  GMRES-HR(20,4) required $562$ matrix vector products to drop $\|r\|/\|b\|$ below the tolerance $10^{-8}.$ 
Thus, GMRES-HR had required nearly thrice the computation than the GMRES-SV method. Further, observe from the Figure-\ref{fig1} that
GMRES-SV(20,4) is far better than GMRES(24) even though it used smaller search subspaces.

The right part of the Figure-\ref{fig1} compares error norms.
When the residual norm reached the tolerance, the $\log10$ of an error norm in GMRES-SV(20,4)is $-6.063,$ whereas it is $-5.063 $ in GMRES-HR(20,4), and  is equal to $-4.813,$ $-4.802$ in the GMRES(24) and GMRES(20) methods respectively. Therefore, for this example, the GMRES-SV method significantly reduced the error norm compared to the remaining three methods.
%
%
%
%
\begin{eg}\label{watt}\relax
Consider the matrix $WATT1$ that came from petroleum engineering. It
is a real un-symmetric matrix of order $1856$ with $11360$ non-zero
entries. The right-hand side is the one provided by the Matrix
Market. To see the performance of GMRES-SV with fewer approximate
singular vectors, we have chosen $m=20$ and $k=2.$ Thus, we have
used only two approximate singular vectors, which are less in number
compared to the previous examples. 
\end{eg}
\begin{figure}[!htb]
\begin{center}
\includegraphics[width=5.5in,height=2.5in]
{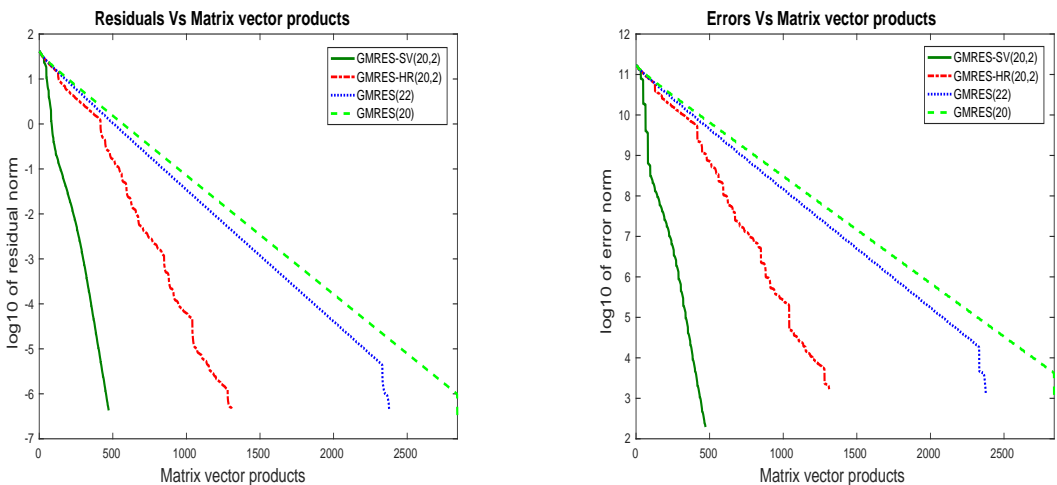}
\vs{-0.5cm}
\caption{ Magnitudes of $\frac{\|r\|}{ \|b\|}$ with GMRES(20,2)  singular vectors/eigenvectors, GMRES(22), and GMRES(20)  (left). Absolute errors with GMRES(20,2)  singular vectors/eigenvectors, GMRES(22), and GMRES(20)  (right). }
\label{fig2}
\end{center}
\vs{-0.9cm}
\end{figure}
Using GMRES-SV(20,2), the
ratio $\|r\|/\|b\|$ reached the required tolerance in the $30^{th}$ run, whereas in GMRES-HR(20,2), GMRES(22), and GMRES(20) it happened in $82^{nd},109^{th},$ and $143^{rd}$ run respectively. See
Figure-\ref{fig2}(left) for the comparison of $\log10$ of residual norms in
all the four methods.

Figure-\ref{fig2}(right), compares the convergence of error norms in four methods. Observe from it that GMRES-SV(20,2) reduced the error norm
to a better extent compared to the other three methods, even
though it took fewer iterations for the convergence of $\|r\|/\|b\|.$ Also, it reduced residual norms as well.
\begin{eg}\label{mor}\relax
This example has taken from \cite{aug}. It is a bidiagonal matrix of order $1000.$ The diagonal elements
are $1,2, \cdots, 1000$ in order. The super diagonal elements are
$0.1s.$ 
We have chosen $m= 20$ and $k=2$ for
GMRES method with singular vectors. We used only two eigenvector
approximations in GMRES-HR. We compare these two methods with GMRES(20) and GMRES(22).
\end{eg}
\begin{figure}[!htb]\relax
\begin{center}
\includegraphics[width=5.5in,height=2.5in]
{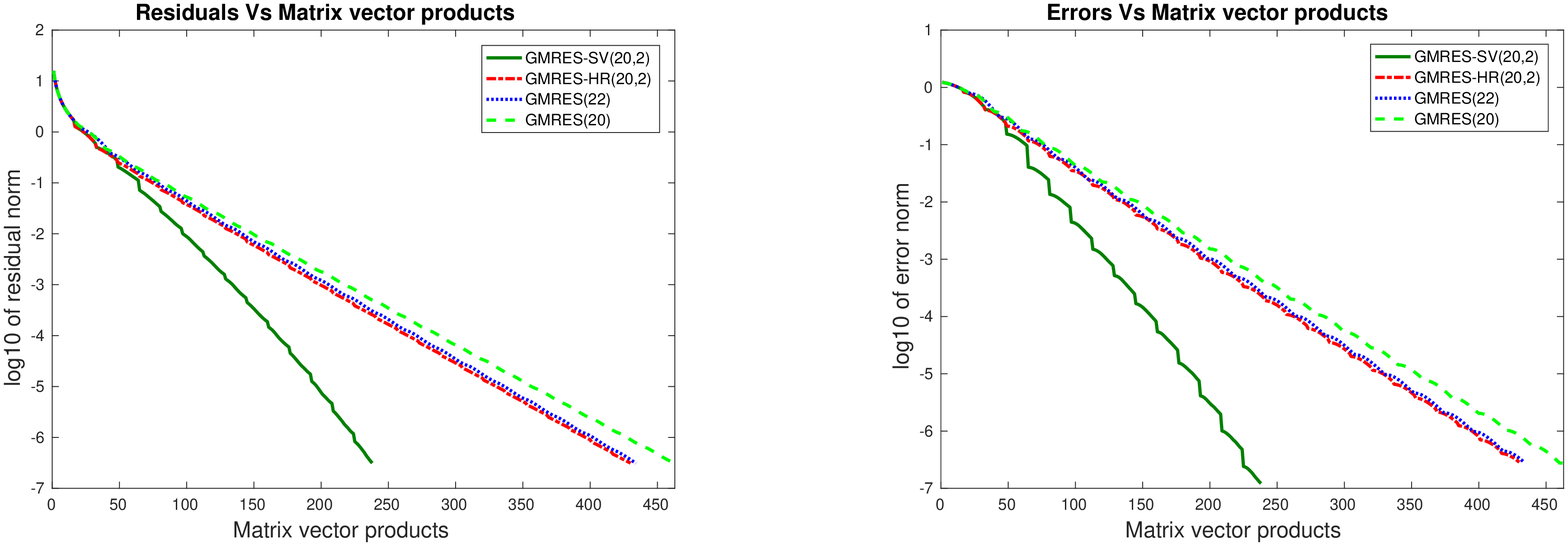}
\caption{ Magnitudes of $\frac{\|r\|}{ \|b\|}$ with GMRES(20,2) singular vectors/eigenvectors, GMRES(22), and GMRES(20)(left). Absolute errors with GMRES(20,2)  singular vectors/eigenvectors, GMRES(22), and GMRES(20) (right). }
\label{fig3}
\end{center}
\vs{-0.5cm}
\end{figure}
Figure-\ref{fig3}(left), compares a ratio $\|r\|/\|b\|$ in these
four different methods. Using GMRES-SV(20,2)
$\|r\|/\|b\|$ had reached the desired tolerance in a $15^{th}$ run,
whereas in GMRES-HR(20,2), GMRES(22), and GMRES(20), this ratio reduced below the tolerance in $27^{th},20^{th},$ and $24^{th}$ run, respectively.
In the Figure-\ref{fig3}(right), we compared the error norms in the four
methods. Though the error is reduced up to the same order in all methods, the GMRES-SV(20,2) has taken less number of matrix-vector products. Further, observe that in GMRES-SV smaller error norm at each iteration accelerates the convergence of residual norms .

Above examples have shown that our new method is effective in accelerating the convergence of GMRES. It also shows that we can use
singular vector approximations instead of eigenvector approximations
to augment the search subspace. Further, example-1 has shown the superiority of the GMRES-SV method even in the case of near stagnation of error norms in  standard GMRES. 

We reported four typical examples in detail though computation carried out on several matrices available in the Matrix Market. The Table-1 reports a summary of results on eight other matrices with various base sizes.
It is apparent from the table that the GMRES with singular vectors method performs better in reducing the error norms compared to standard GMRES and GMRES-HR.

\begin{table}[!htb]
    \begin{tiny}
    \begin{center}
        \caption{Summary results on other matrices}
        \begin{tabular}{|c|c|c|c|c|c|c|}
            \hline
Matrix& Method &rhs&Initial vector &MVP&$\|r\|/\|b\|$&error \\ \hline
Add20 & GMRES-SV(30,4) &   NIST &  Zeros(2395,1) & 17*26 & 8.903030927585648e-09 & 6.249677395458858e-15\\
     &GMRES(30) & &&        27*30+23&   9.951915432072370e-09& 1.269588940989828e-14\\
          &GMRES-HR(30,4) & &&24*26+9        &9.859393321942817e-09   & 1.326769652866634e-14
 \\&& &&        &   & \\
Bcsstm12&   GMRES-SV(30,4)&    Ones(1473,1)&   Zeros(1473,1)&  7*26+21&  9.735591826540923e-09& 4.010655907680425e-05
\\
    &GMRES(30)  &&&  7*30+18&   8.504590396065243e-09& 3.371368967798073e-05\\
              &GMRES-HR(30,4) & &&9*26+4      &9.359680271393492e-09   & 3.454242536809857e-05 \\&& &&        &   & \\
Cavity05    &GMRES-SV(30,4)    &NIST   &Zeros(1182,1)& 58*26+3    &9.979063739097918e-09  &    8.864891034548036e-17\\
    &GMRES(30)* &&&     300*30&9.083509763407927e-06 &4.366773990806636e-12\\
              &GMRES-HR(30,4)* & &&  300*30     &4.188013345206927e-04   & 2.098667819011905e-10 \\&& &&        &   & \\
Cavity10&   GMRES-SV(30,4)&    NIST    &Zeros(2597,1)& 107*26+2&  9.748878041774344e-09&1.082459456375754e-14\\
    &GMRES(30)* &&&     300*30&  1.480273732125253e-05  &1.745590861026809e-09\\
              &GMRES-HR(30,4)* & && 300*30       & 4.412649085564300e-05  & 5.202816341776465e-09 \\&& &&        &   & \\
Cdde1&  GMRES-SV(30,4)&    Ones(961,1) &Zeros(961,1)&  7*26+4 &   9.375985059553094e-09  &9.904876436183557e-06\\
    &GMRES(30)      &&& 28*30+24&   9.903484254958585e-09
  &     8.057216911709931e-05\\
              &GMRES-HR(30,4) & &&42*26  & 9.784366641015672e-09
  &5.754305524376241e-05   \\&& &&        &   & \\
$Orsreg_1$  &GMRES-SV(30,4)&   Ones(2205,1)&   Zeros(2205,1)&  9*26+18&    9.636073229113061e-09 & 3.069738625094621e-08 \\
    &GMRES(30)&&&           13*30+18&   9.671973942415371e-09& 8.655424850905282e-08\\
         &GMRES-HR(30,4) & && 15*26+24  & 9.858474091767057e-09  &7.314829956508281e-08 \\&& &&        &   & \\
Sherman1    &GMRES-SV(30,4)    &NIST&  Zeros(1000,1)&  34*26+16 &9.988703017482971e-09&    3.073177766807258e-05 \\
    &GMRES(30)  &&&     103*30+21&   9.987479947720702e-09& 1.160914298556447e-04\\
              &GMRES-HR(30,4) & && 35*26+1 &8.658352412777792e-09 & 5.119874131512304e-05 \\&& &&        &   & \\
$Watt_2$    &GMRES-SV(30,4)&   Ones(1856,1)&   Zeros(1856,1)&  40*26    &9.956725136180967e-09&    1.955980343013359e+03\\
    &GMRES(30)  &&&     168*30+6  &9.897399520154960e-09
&    6.004185029785801e+03\\
              &GMRES-HR(30,2) & &&254*26  &9.996375252736734e-09
   & 7.072952057904888e+03  \\
 \hline
\end{tabular}
\end{center}
\end{tiny}
\end{table}
In Table-1, NIST refers to the right-hand side vector provided by Matrix Market website and GMRES* represents the non-convergence of the GMRES method even after 300 iterations. Moreover, for counting the number of matrix-vector products(MVP) we followed the same procedure as in  \cite{aug}.  In the above table $x*y+z$ means in each of the $x$ iterations, the specific method used $y$ MVPs and in the $(x+1)^{th}$ iteration, it used $z$ Matrix-Vector Products.

\section{Conclusions}
In this paper, a new augmentation procedure in GMRES has been proposed using approximate right singular vectors of a coefficient matrix. The proposed  method has an advantage that it requires less computation compared to the GMRES with Harmonic Ritz vectors method. Unlike the augmentation method in \cite{aug}, the proposed method reduces the error norms also to a better extent. 
Further, the proposed method involves the computation in real arithmetic  for the  matrices and right-hand side vectors in the real number system. Numerical experiments have been carried out on benchmark matrices. Results have shown the superiority of the proposed method over the standard GMRES and  GMRES with Harmonic Ritz vectors methods.

\section*{Acknowledgements} 
The author thanks the National Board of Higher Mathematics, India for supporting this work under the Grant number2/40(3)/2016/R\&D-II/9602 . 

\section*{References}

\end{document}